\theoremstyle{plain}
\newtheorem{theorem}                 {Theorem}      [section]
\newtheorem{proposition}  [theorem]  {Proposition}
\newtheorem{corollary}    [theorem]  {Corollary}
\newtheorem{lemma}        [theorem]  {Lemma}
\newtheorem*{theorem*}{Theorem}
\theoremstyle{definition}
\newtheorem{example}      [theorem]  {Example}
\newtheorem{remark}       [theorem]  {Remark}
\def \r{\mbox{${\mathbb R}$}}
\def \c{\mbox{${\mathbb C}$}}
\def \h{\mbox{${\mathbb H}$}}
\def \s{\mbox{${\mathbb S}$}}
\def \z{\mbox{${\mathbb Z}$}}
\def \g{\mbox{$g_{\varepsilon}$}}
\def \n{\mbox{${{\nabla}}^\varepsilon$}}
\DeclareMathOperator{\cst}{constant}
\DeclareMathOperator{\trace}{trace}
\begin{document}

\title{Helix surfaces in the Berger Sphere}

\author{Stefano Montaldo}

\address{Universit\`a degli Studi di Cagliari\\
Dipartimento di Matematica e Informatica\\
Via Ospedale 72\\
09124 Cagliari}
\email{montaldo@unica.it}

\author{Irene I. Onnis}
\address{Departamento de Matem\'{a}tica\\ ICMC/USP-Campus de S\~ao Carlos\\
Caixa Postal 668\\ 13560-970 S\~ao Carlos, SP, Brazil}
\email{onnis@icmc.usp.br}

\keywords{Helix surface, constant angle surfaces, Berger sphere}
\thanks{The first author was supported by PRIN 2010--11, {\it Variet\`a reali e complesse: geometria, topologia e analisi armonica}, Italy, N. 2010NNBZ78\_003. The second author was supported by CNPq, Brazil.}

\begin{abstract}
We characterize helix surfaces in the Berger sphere, that is surfaces which form a constant angle with the Hopf vector field. In particular, we show that,
 locally, a helix surface is determined by a suitable 1-parameter family of isometries of the Berger sphere and by a geodesic of a $2$-torus in the $3$-dimensional sphere.
 \end{abstract}

\maketitle

\section{Introduction}

In the euclidean space  $\r^3$ a {\em helix surface}  or a {\em constant angle surface} is an oriented surface such that its normal vector field forms a constant angle with a fixed direction in the space. These surfaces have an important role in the physics of interfaces in liquid crystals and of layered fluids, as shown by Cermelli and Di Scala
in \cite{CDS07}, where they have also obtained a remarkable relation with a Hamilton-Jacobi type equation. \\

A constant direction in the euclidean space can be thought as a Killing vector field with constant norm. Thus, if we want to generalize the notion of helix surfaces in a 3-dimensional Riemannian manifold, a natural problem is to
study surfaces such that their normal vector field forms a constant angle with a Killing  vector field of constant norm. \\

Among the three dimensional manifolds, beside the space forms, probably the most important are the 3-dimensional homogeneous manifolds.  Most of these spaces admit a Killing vector field of constant norm and thus it is natural to study the corresponding helix surfaces. In fact, the study and classification of helix surfaces in 3-dimensional homogeneous manifolds was done: for surfaces in  $\s^2\times\r$ by Dillen--Fastenakels--Van der Veken--Vrancken (\cite{DM09}); for surfaces in  $\h^2\times\r$ by Dillen--Munteanu  (\cite{DM09}); for surfaces in  the Heisenberg group by Fastenakels--Munteanu--Van Der Veken  (\cite{FMV11});  for surfaces in  $Sol_3$ by L\'opez--Munteanu (\cite{LM11}). \\

We also would like to point out that, in a similar way, we can define helix submanifolds in
 higher dimensional euclidean  spaces  and we advise the interested reader to have a look at \cite{DSRH10,DSRH09,RH11}.\\

In this paper, in order to take a step towards the  classification of helix surfaces in 3-dimensional homogeneous manifolds, we consider surfaces in the 3-dimensional Berger sphere which is defined, using the Hopf fibration, as follows.
Let
$\s^2({1}/{2})=\{(z,t)\in\c\times\r\colon |z|^2+t^2={1}/{4}\}\subset\c\times\r$ be the usual  2-sphere and let $\s^3=\{(z,w)\in\c^2\colon |z|^2+|w|^2=1\}\subset\c\times\c=\r^4$ be the usual 3-sphere. Then the Hopf map $\psi:\s^3\to\s^2({1}/{2})$, defined by
$$
\psi(z,w)=\frac{1}{2}\,(2z\bar{w},|z|^2-|w|^2)\,,
$$
is a Riemannian submersion and the vector fields
$$
X_1(z,w)=(iz,iw),\quad X_2(z,w)=(-i\bar{w},i\bar{z}),\quad X_3(z,w)=(-\bar{w},\bar{z})\,,
$$
parallelize $\s^3$ with $X_1$  being vertical and $X_2$, $X_3$ horizontal. The vector field $X_1$ is called the {\em Hopf vector field}. The Berger sphere $\s^3_\varepsilon$, $\varepsilon>0$, is the sphere $\s^3$ endowed with the metric
$$
g_\varepsilon(X,Y)=\langle X,Y\rangle+(\varepsilon^2-1)\langle X,X_1\rangle \,\langle Y,X_1\rangle\,,
$$
where $\langle,\rangle$ represents the canonical metric of $\s^3$. \\

The Hopf vector field $X_1$ is a Killing vector field of constant norm, thus we define a {\em helix surface} in  $\s^3_\varepsilon$ as a surface
 such that its unit normal $N$ satisfies
$$
|g_{\varepsilon}( X_1,N)|=\varepsilon \cos\vartheta,
$$
for fixed $\vartheta\in[0,\pi/2]$.
 \vspace{3mm}

From a classical result of Reeb \cite{Reeb}, a compact surface in the Berger sphere cannot be transverse to the Hopf vector field everywhere. This means that the notion of helix surfaces in $\s^3_\varepsilon$, with $\vartheta\neq\pi/2$, is meaningful only in the non compact case. For this reason our study will be local and it will aim to the following characterization of helix surfaces which represents the main result of the paper.\vspace{3mm}

\noindent {\bf Theorem}~{\bf \ref{teo-principal}.}
{\em Let $M^2$ be a helix surface in the Berger sphere $\s^3_\varepsilon$ with constant angle $\vartheta\neq\pi/2$. Then there exist local coordinates on $M^2$ such that the position vector of $M^2$ in $\r^4$  is
$$
F(u,v)=A(v)\,\beta(u)\,,
$$
where
$$
\beta(u)=(\sqrt{c_1}\,\cos (\alpha_1 u),\sqrt{c_1}\,\sin (\alpha_1 u),\sqrt{c_2}\,\cos (\alpha_2 u),\sqrt{c_2}\,\sin (\alpha_2 u))
$$
is a geodesic in the torus $\s^1(\sqrt{c_1})\times\s^1(\sqrt{c_2})\subset \s^3$, with
$$
c_{1,2}=\frac{1}{2}\mp \frac{\varepsilon\cos\vartheta}{2 \sqrt{B}}\,,\quad \alpha_1=\frac{2B}{\varepsilon} c_2\,,\quad \alpha_2=\frac{2B}{\varepsilon} c_1\,,\quad B=1+(\varepsilon^2-1)\cos^2\vartheta\,,
$$
while $A(v)=A(\xi,\xi_1,\xi_2,\xi_3)(v)$ is a 1-parameter family of $4\times 4$ orthogonal matrices commuting with a complex structure $J_1$ of $\r^4$, as described in \eqref{eq-descrizione-A}, with $\xi=\cst$ and
$$
  \cos^2(\xi_1(v)) \,\xi_{2}'(v)-\sin^2(\xi_1(v))\, \xi_{3}'(v)=0\,.
$$
Conversely, a parametrization $F(u,v)=A(v)\,\beta(u)$, with $\beta(u)$ and $A(v)$ as above, defines
 a helix surface in the Berger sphere $\s^3_\varepsilon$ with constant angle $\vartheta\neq\pi/2$. }
\vspace{3mm}

\section{Helix surfaces}

With respect to the orthonormal  basis on $\s^3_{\varepsilon}$ defined by
\begin{equation}\label{eq-basis}
    E_1=\varepsilon^{-1}\,X_1,\quad
     E_2=X_2,\quad
     E_3=X_3,
\end{equation}
the Levi-Civita connection $\n$ of $(\s^3_{\varepsilon},g_\varepsilon)$ is given by:
\begin{equation}
\begin{array}{lll}
\n_{E_{1}}E_{1}=0,& \n_{E_{2}}E_{2}=0,& \n_{E_{3}}E_{3}=0,\\
\n_{E_{1}}E_{2}=\varepsilon^{-1}(2-\varepsilon^2)E_{3}, & &\n_{E_{1}}E_{3}=-\varepsilon^{-1}(2-\varepsilon^2)E_{2},\\
 \n_{E_{2}}E_{1}=-\varepsilon E_{3},& \n_{E_{3}}E_{1}=\varepsilon E_{2},&
 \n_{E_{3}}E_{2}=-\varepsilon E_{1}=-\n_{E_{2}}E_{3}.
\end{array}
\label{nabla}
\end{equation}
Let $M^2$ be an oriented helix surface in $\s^3_\varepsilon$ and let  $N$ be a unit normal vector field. Then, by definition,
$$
|g_{\varepsilon}(E_1,N)|=\cos\vartheta,
$$
for fixed $\vartheta\in[0,\pi/2]$. Note that $\vartheta\neq 0$. In fact, if it were then the vector fields $E_2$ and $E_3$   would be tangent to the surface $M^2$, which is absurd since  the horizontal distribution of the Hopf map is not integrable. If $\vartheta=\pi/2$, we have that $E_1$ is always tangent to $M$ and, therefore, $M$ is a Hopf tube. Therefore, from now on we assume that the constant angle $\vartheta\neq \pi/2,0$.\\

The Gauss and Weingarten formulas, for all $X,Y\in C(TM)$, are
\begin{equation}\label{gauss-wein}\begin{aligned}
    \n_X Y&=\nabla_X Y+\alpha(X,Y),\\
    \n_X N&=-A(X),
    \end{aligned}
\end{equation}
 where with $A$ we have indicated  the shape operator of $M$ in $\s^3_\varepsilon$, with $\nabla$ the induced Levi-Civita connection on $M$ and by $\alpha$  the second fundamental form on $M$ in $\s^3_\varepsilon$.
Decomposing  $E_1$ into its tangent and normal components we have
$$
E_1=T+\cos\vartheta\, N\,,
$$
where $T$ satisfies $\g(T,T)=\sin^2\vartheta.$\\

For all $X\in C(TM)$, we have that
\begin{equation}\label{eq1}
\begin{aligned}
    \n_X E_1&=\n_X T-\cos\vartheta\,A(X)\\&=\nabla_X T+g_{\varepsilon}( A(X),T)\,N-\cos\vartheta\,A(X).
    \end{aligned}
\end{equation}
On the other hand (we refer to \cite{B}), if $X=\sum X_i E_i$,
\begin{equation}\label{eq2}\begin{aligned}
    \n_X E_1&=\varepsilon\,(X_3 E_2-X_2 E_3)\\
    &=\varepsilon\,\g(  JX,T)\,N-\varepsilon\,\cos\vartheta JX,
    \end{aligned}
\end{equation}
where $JX$ denotes the rotation of angle $\pi/2$ on $TM$.
Identifying the tangent and normal components of \eqref{eq1} and \eqref{eq2} respectively, we obtain
\begin{equation}\label{eq3}
    \nabla_X T= \cos\vartheta\,(A(X)-\varepsilon\,JX)
\end{equation}
and
\begin{equation}\label{eq4}
    \g(  A(X)-\varepsilon\,JX,T)=0.
\end{equation}

\begin{lemma}\label{princ}
Let $M^2$ be an oriented helix surface with constant angle $\vartheta$  in $\s^3_\varepsilon$. Then, we have that:
\begin{itemize}
  \item[(i)] with respect to the basis $\{T,JT\}$, the matrix associated to the shape operator $A$ takes the following form
 $$
 A=\left(
\begin{array}{cc}
 0 & -\varepsilon \\
 -\varepsilon & \lambda\\
 \end{array}
 \right)\,,
 $$
 for some function $\lambda$ on $M$;
 \item[(ii)] the Levi-Civita connection $\nabla$ of $M$ is given by
$$
 \nabla_T T=-2\varepsilon\cos\vartheta\, JT,\qquad \nabla_{JT} T=\lambda\cos\vartheta\, JT\,,
 $$
$$
\nabla_T JT=2\varepsilon\cos\vartheta\, T,\qquad \nabla_{JT} JT=-\lambda\cos\vartheta\, T\,;
$$
\item[(iii)] the Gauss curvature of $M$ is constant and satisfies
$$
K=4(1-\varepsilon^2)\,\cos^2\vartheta\,;
$$
\item[(iv)] the function $\lambda$, defined in \rm{(i)}, satisfies the following equation
\begin{equation}\label{lambda}
T\lambda+\lambda^2\,\cos\vartheta+4(\varepsilon^2-1)\,\cos^3\vartheta+4\cos\vartheta=0\,.
\end{equation}
\end{itemize}
\end{lemma}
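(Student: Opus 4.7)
The four items are proved in turn, each building on the previous ones.

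\emph{Part (i).} I apply \eqref{eq4} to the test vectors $X=T$ and $X=JT$. With $X=T$, since $g_\varepsilon(JT,T)=0$, one obtains $g_\varepsilon(A(T),T)=0$, so $A(T)$ has no $T$-component. With $X=JT$, using $J^2=-\mathrm{Id}$ and $g_\varepsilon(T,T)=\sin^2\vartheta$, one gets $g_\varepsilon(A(JT),T)=-\varepsilon\sin^2\vartheta$. Combining these with the symmetry of $A$ and the fact that $g_\varepsilon(JT,JT)=\sin^2\vartheta$ forces $A(T)=-\varepsilon\,JT$ and $A(JT)=-\varepsilon\,T+\lambda\,JT$, which is the asserted matrix.

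\emph{Part (ii).} I insert the expressions for $A(T)$ and $A(JT)$ from (i) into \eqref{eq3}, specialising $X$ to $T$ and then to $JT$; this yields the formulas for $\nabla_T T$ and $\nabla_{JT}T$ directly. The dual formulas for $\nabla_X JT$ follow by differentiating the identities $g_\varepsilon(T,JT)=0$ and $g_\varepsilon(JT,JT)=\sin^2\vartheta=\mathrm{const}$, which give $g_\varepsilon(\nabla_X JT,JT)=0$ and $g_\varepsilon(\nabla_X JT,T)=-g_\varepsilon(JT,\nabla_X T)$ for every tangent $X$.

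\emph{Part (iii).} I apply the Gauss equation. The key observation is that $JT$ is horizontal:
$$
g_\varepsilon(JT,E_1)=g_\varepsilon(JT,T+\cos\vartheta\,N)=0.
$$
Hence, after a rotation in the horizontal distribution, one may assume $JT=\sin\vartheta\,E_3$ and $T=\sin\vartheta(\sin\vartheta\,E_1+\cos\vartheta\,E_2)$. Expanding $R(T,JT,JT,T)$ by multilinearity and using the standard sectional curvatures $K(E_1,E_3)=\varepsilon^2$ and $K(E_2,E_3)=4-3\varepsilon^2$ of the Berger sphere (the mixed terms $R(E_1,E_3,E_3,E_2)$ and $R(E_2,E_3,E_3,E_1)$ vanish), one computes $K^{\mathrm{amb}}=\varepsilon^2+4(1-\varepsilon^2)\cos^2\vartheta$. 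Since in the orthonormal basis $\{T/\sin\vartheta,JT/\sin\vartheta\}$ the matrix of $A$ still reads as in (i), one has $\det A=-\varepsilon^2$, and the Gauss equation gives $K=K^{\mathrm{amb}}+\det A=4(1-\varepsilon^2)\cos^2\vartheta$.

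\emph{Part (iv).} I compute $K$ intrinsically from the connection of (ii) and equate with (iii). Setting $e_1=T/\sin\vartheta$, $e_2=JT/\sin\vartheta$, $a=2\varepsilon\cos\vartheta/\sin\vartheta$ and $b=\lambda\cos\vartheta/\sin\vartheta$, part (ii) rewrites as
$$
\nabla_{e_1}e_1=-a\,e_2,\ \ \nabla_{e_1}e_2=a\,e_1,\ \ \nabla_{e_2}e_1=b\,e_2,\ \ \nabla_{e_2}e_2=-b\,e_1,
$$
so $[e_1,e_2]=a\,e_1-b\,e_2$. Since $a$ is a constant, a direct computation of $R(e_1,e_2)e_2$ yields $K=-e_1(b)-a^2-b^2$. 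Multiplying by $\sin^2\vartheta/\cos\vartheta$, substituting $K=4(1-\varepsilon^2)\cos^2\vartheta$ from (iii) and using $\sin^2\vartheta=1-\cos^2\vartheta$ produces exactly \eqref{lambda}. The only real obstacle across the whole lemma is (iii): one must remember that $\{T,JT\}$ is orthogonal but not orthonormal (of length $\sin\vartheta$), and that the plane $T\wedge JT$ is a mixed vertical–horizontal plane whose sectional curvature is not one of the coordinate curvatures of $\s^3_\varepsilon$; once the horizontality of $JT$ is exploited to fix a convenient frame, everything becomes bookkeeping.
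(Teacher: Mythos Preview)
Your argument is correct; parts (i)--(iii) follow essentially the paper's route, with the only difference that in (iii) the paper simply quotes the Gauss equation in the Daniel form
\[
K=\det A+\varepsilon^2+4(1-\varepsilon^2)\cos^2\vartheta
\]
(valid for surfaces in $\s^3_\varepsilon$), whereas you recompute the ambient sectional curvature of the plane $T\wedge JT$ by hand after observing that $JT$ is horizontal. Either way one lands on $K=4(1-\varepsilon^2)\cos^2\vartheta$.

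Where you genuinely diverge from the paper is (iv). The paper obtains \eqref{lambda} from the \emph{Codazzi} equation
\[
\nabla_X A(Y)-\nabla_Y A(X)-A[X,Y]=4(1-\varepsilon^2)\cos\vartheta\bigl(g_\varepsilon(Y,T)X-g_\varepsilon(X,T)Y\bigr),
\]
with $X=T$, $Y=JT$, together with (ii). You instead compute $K$ \emph{intrinsically} from the connection formulas of (ii), obtaining $K=-e_1(b)-a^2-b^2$ in your notation, and then equate this with the value of $K$ found in (iii). Both approaches are valid and equally short. The paper's route is the expected one---(iii) uses Gauss, (iv) uses Codazzi, so the two fundamental equations are exploited once each---and has the mild advantage that it would still give \eqref{lambda} even if one had not already computed $K$. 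Your route is a pleasant alternative that avoids writing down the Codazzi equation for $\s^3_\varepsilon$ altogether, at the cost of making (iv) logically depend on (iii); it also shows transparently that \eqref{lambda} is nothing but the statement that the intrinsic and extrinsic values of $K$ agree.
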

\begin{proof} Point (i) follows directly from \eqref{eq4}.
From \eqref{eq3} and using
$$
\g(  T,T)=\g(  JT,JT)=\sin^2\vartheta,\quad \g(  T,JT)=0\,,
$$
we obtain (ii). From the Gauss equation of $M$ in $\s^3_\varepsilon$ (see \cite{B}), and taking into account  (i), we have that the Gauss curvature of $M$ is given by
$$
\begin{aligned}K&=\det A+\varepsilon^2+4(1-\varepsilon^2)\,\cos^2\vartheta\\
&=4(1-\varepsilon^2)\,\cos^2\vartheta\,.
\end{aligned}
$$
Finally, \eqref{lambda} follows from the Codazzi equation (see \cite{B}):
$$
\nabla_X A(Y)-\nabla_Y A(X)-A[X,Y]=4(1-\varepsilon^2)\,\cos\vartheta\,(\g(  Y,T) X-\g(  X,T) Y)\,,
$$
 putting $X=T$, $Y=JT$ and using (ii).

\end{proof}

\begin{remark}
We point out that if a helix surface is minimal, that is $\trace A=0$, then $\vartheta=\pi/2$. In fact,
from (i) of Lemma~\ref{princ}, $\lambda=0$ and using \eqref{lambda} it follows that
$\cos\vartheta(1+(\varepsilon^2-1)\cos^2\vartheta)=0$, which implies, since $1+(\varepsilon^2-1)\cos^2\vartheta$ is different from zero for all $\vartheta\in[0,\pi/2]$, that $\vartheta=\pi/2$.
\end{remark}

Now, as $g_\varepsilon (E_1,N)=\cos\vartheta$, there exists a smooth function $\varphi$ on $M$ so that
\begin{equation*}\label{eq-def-N-Ei}
N=\cos\vartheta E_1+\sin\vartheta\cos\varphi\,E_2+\sin\vartheta\sin\varphi\,E_3.
\end{equation*}
Therefore
\begin{equation}\label{eq:def-T}
T=E_1-\cos\vartheta\,N=\sin\vartheta\,[\sin\vartheta\,E_1-\cos\vartheta\cos\varphi\,E_2-\cos\vartheta\sin\varphi\,E_3]
\end{equation}
and
$$
JT=\sin\vartheta\,(\sin\varphi\,E_2-\cos\varphi\,E_3)\,.
$$
Also
\begin{equation}\begin{aligned}\label{eqTJ}
 A(T)&=-\n_T N=(T\varphi+\varepsilon^{-1}(2-\varepsilon^2)\,\sin^2\vartheta+\varepsilon\cos^2\vartheta)\,JT,\\
A(JT)&=-\n_{JT} N=(JT\varphi)\,JT-\varepsilon\,T\,.
\end{aligned}
\end{equation}
Comparing \eqref{eqTJ} with (i) of  Lemma~\ref{princ}, it results that
\begin{equation}\label{eqTJ1}
\left\{\begin{aligned}
JT\varphi&=\lambda\,,\\
T\varphi&=-2\varepsilon^{-1}\,B\,,
\end{aligned}
\right.
\end{equation}
where
\begin{equation}\label{eq-defB}
B=1+(\varepsilon^2-1)\cos^2\vartheta\,.
\end{equation}
We observe that, as
$$
[T,JT]=\cos\vartheta\,(2\varepsilon\, T-\lambda\,JT),
$$
the compatibility condition of system~\eqref{eqTJ1},
$$
(\nabla_T JT-\nabla_{JT} T)\varphi=[T,JT]\varphi=T(JT\varphi)-JT(T\varphi),
$$
is equivalent to \eqref{lambda}.\\

We now choose local coordinates $(u,v)$ on $M$ such that
\begin{equation}\label{eq:local-coordinates}
\partial_u=T.
\end{equation}
 Also, as $\partial_v$ is tangent to $M$, it can be written in the form
 \begin{equation}\label{eq-definition-Fv}
 \partial_v=a\,T+b\,JT\,,
 \end{equation}
for certain functions $a=a(u,v)$ and $b=b(u,v)$. Since
$$
0=[\partial_u,\partial_v]=(a_u+2\varepsilon b\cos\vartheta)\,T+(b_u-b\lambda\cos\vartheta)\,JT\,,
$$
we obtain
\begin{equation}\label{eqab}
\left\{\begin{aligned}
a_u&=-2\varepsilon b\cos\vartheta\,,\\
b_u&=b\lambda\cos\vartheta\,.
\end{aligned}
\right.
\end{equation}
Moreover, writing \eqref{lambda}  as
$$
\lambda_u+\cos\vartheta\,\lambda^2+4(\varepsilon^2-1)\,\cos^3\vartheta+4\cos\vartheta=0\,,
$$
after integration, one gets
\begin{equation}\label{eqlambda}
\lambda(u,v)=2\,\sqrt{B}\tan (\eta(v)-2\cos\vartheta \sqrt{B}\,u)\,,
\end{equation}
for some smooth function $\eta$ depending on $v$.
Replacing \eqref{eqlambda} in  \eqref{eqab} and solving the system, we obtain
\begin{equation}\label{solab}
\left\{\begin{aligned}
a(u,v)&=\frac{\varepsilon}{\sqrt{B}}\sin (\eta(v)-2\cos\vartheta \sqrt{B}\,u),\\
b(u,v)&=\cos(\eta(v)-2\cos\vartheta \sqrt{B}\,u)\,.
\end{aligned}
\right.
\end{equation}
Therefore \eqref{eqTJ1} becomes
\begin{equation}\label{eqTJ2}\left\{\begin{aligned}
\varphi_u&=-2\varepsilon^{-1}B\,,\\
\varphi_v&=0\,,
\end{aligned}
\right.
\end{equation}
of which the general solution is given by
\begin{equation}
\varphi(u,v)=-2\varepsilon^{-1}B\,u+c\,,
\end{equation}
where $c$ is a real constant.

With respect to the local coordinates $(u,v)$ described above we have the following characterization of the position vector of a helix surface.
\begin{proposition}
Let $M^2$ be a helix surface in $\s^3_\varepsilon$ with constant angle $\vartheta$.  Then, with respect to the local coordinates $(u,v)$ on $M$ defined in \eqref{eq:local-coordinates},  the position vector $F$ of $M^2$ in $\r^4$ satisfies the equation
\begin{equation}\label{eqquarta}
\frac{\partial^4F}{\partial u^4}+(\tilde{b}^2-2\tilde{a})\,\frac{\partial^2F}{\partial u^2}+\tilde{a}^2\,F=0\,,
\end{equation}
where
\begin{equation}\label{eq:value-a-b}
\tilde{a}=\varepsilon^{-2}\sin^2\vartheta\, B\,, \qquad \tilde{b}=-2 \varepsilon^{-1}\, B
\end{equation}
and $B=1+(\varepsilon^2-1)\cos^2\vartheta$.
\end{proposition}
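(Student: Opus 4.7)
\emph{Proof plan.}
The plan is to differentiate the position vector $F\colon M^2\to\r^4$ four times with respect to $u$ and verify \eqref{eqquarta} by direct computation in the ambient $\r^4$. The key structural observation is that $X_1,X_2,X_3$ are restrictions to $\s^3$ of linear maps on $\r^4$: identifying $\r^4\cong\h$, there exist three orthogonal complex structures $J_1,J_2,J_3$ on $\r^4$ (a quaternionic triple) with $X_i=J_iF$, hence $E_1=\varepsilon^{-1}J_1F$, $E_2=J_2F$, $E_3=J_3F$. Thus $\{F,E_1,E_2,E_3\}$ is an orthogonal basis of $\r^4$ at each point of $\s^3$, and the Euclidean derivative along $u\mapsto F(u,v)$ satisfies $\partial_u E_i=J_i\partial_u F=J_iT$; the products $J_iE_j$ are tabulated once from the quaternionic relations.

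Introduce auxiliary unit vectors $Z=\cos\varphi\,E_2+\sin\varphi\,E_3$ and $Y=\sin\varphi\,E_2-\cos\varphi\,E_3$ in the $(E_2,E_3)$-plane, so that by \eqref{eq:def-T}, $F_u=T=\sin^2\vartheta\,E_1-\sin\vartheta\cos\vartheta\,Z$ and $JT=\sin\vartheta\,Y$. Differentiating and using $\varphi_u=-2\varepsilon^{-1}B$ from \eqref{eqTJ2}, all $\partial_uE_i$ contributions to the $E_1,E_2,E_3$ directions cancel; the only surviving pieces are an $F$-term (arising from the images $J_iE_i\propto F$) and a $Y$-term (coming from $\varphi_u$ acting on the trigonometric coefficients in $T$). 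The identity $\sin^2\vartheta+\varepsilon^2\cos^2\vartheta=B$ simplifies the $F$-coefficient to $-\tilde a$, yielding
\[
F_{uu}=-\tilde a\,F+\tilde b\sin\vartheta\cos\vartheta\,Y.
\]

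Iterating the same procedure gives
\[
Y_u=\tfrac{\tilde b}{2}\,Z+\varepsilon\cot\vartheta\,T,\qquad Z_u=\varepsilon^{-1}(2B-\sin^2\vartheta)\,Y+\sin\vartheta\cos\vartheta\,F,
\]
so that $F_{uuu}$ lives in the span of $T$ and $Z$, and a final differentiation pushes $F_{uuuu}$ back into the span of $F$ and $Y$---precisely where $F$ and $F_{uu}$ lie. Matching the coefficients of $F$ and of $Y$ in $F_{uuuu}+(\tilde b^2-2\tilde a)F_{uu}+\tilde a^2F$ produces two scalar identities in $\varepsilon$ and $\vartheta$, each of which reduces once more to $B=\sin^2\vartheta+\varepsilon^2\cos^2\vartheta$; this establishes \eqref{eqquarta}.

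The main obstacle is purely book-keeping: one must track coefficients in the frame $\{F,E_1,Y,Z\}$ through three successive Euclidean differentiations while $\varphi(u,v)$ enters nonlinearly in the trigonometric factors. What makes everything collapse to a constant-coefficient linear ODE is precisely the fact, established in \eqref{eqTJ2}, that $\varphi_u$ depends only on the constants $\varepsilon$ and $\vartheta$; combined with the identity $B=\sin^2\vartheta+\varepsilon^2\cos^2\vartheta$, this fixes the characteristic polynomial to be $r^4+(\tilde b^2-2\tilde a)r^2+\tilde a^2$, whose roots will later match the frequencies $\alpha_1,\alpha_2$ appearing in the parametrization of $F$.
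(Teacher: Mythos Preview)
Your argument is correct and follows the same underlying idea as the paper: differentiate $F_u=T$ successively in the ambient $\r^4$, using that each $E_i$ is the restriction of a linear map of $\r^4$, and observe that the resulting relations close up into a constant-coefficient fourth-order ODE because $\varphi_u$ is constant.

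The difference is one of packaging. The paper writes everything in the standard coordinates $(F_1,F_2,F_3,F_4)$: from \eqref{eqprime} it obtains the coupled second-order system \eqref{eqsegunda}, which in vector form reads $F_{uu}=\tilde a\,F-\tilde b\,J_1F_u$, and then differentiates twice more. You instead work in the moving orthogonal frame $\{F,E_1,Y,Z\}$ and obtain the equivalent relation $F_{uu}=-\tilde a\,F+\tilde b\sin\vartheta\cos\vartheta\,Y$; these agree once one notes that $J_1F_u=J_1T=-\varepsilon^{-1}\sin^2\vartheta\,F-\sin\vartheta\cos\vartheta\,Y$, so the apparent sign discrepancy in the $F$-coefficient is absorbed by the $J_1F_u$ term. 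Your formulas for $Y_u$ and $Z_u$ check out, and the closure of the system in $\operatorname{span}\{F,Y\}$ versus $\operatorname{span}\{T,Z\}$ is exactly what the paper exploits componentwise. What your frame formulation buys is a cleaner geometric picture (the quaternionic triple $J_1,J_2,J_3$ is made explicit, and one sees immediately why the ODE has constant coefficients), while the paper's coordinate approach makes the passage from \eqref{eqsegunda} to \eqref{eqquarta} a two-line elimination. Either way the content is the same computation.
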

\begin{proof}
Let $M^2$ be a helix surface in $\s_{\varepsilon}^3\subset\r^4$ and let $F$ be the position vector of $M^2$ in $\r^4$. Then, with respect to the local coordinates $(u,v)$ on $M$ defined in \eqref{eq:local-coordinates}, we can write $F(u,v)=(F_1(u,v),\dots,F_4(u,v))$. By definition, taking into account \eqref{eq:def-T}, we have that
\begin{equation}\label{eq-F2-as-Ei}
\begin{aligned}\partial_u F&=(\partial_uF_1,\partial_uF_2,\partial_uF_3,\partial_uF_4)=T\\
&=\sin\vartheta\,[\sin\vartheta\,{E_1}_{|F(u,v)}-\cos\vartheta\cos\varphi\,{E_2}_{|F(u,v)}-\cos\vartheta\sin\varphi\,{E_3}_{|F(u,v)}]\,.
\end{aligned}
\end{equation}
Using the expression of $E_1$, $E_2$ and $E_3$ with respect to the coordinates vector fields of $\r^4$, the latter implies that
\begin{equation}\label{eqprime}\left\{\begin{aligned}
\partial_uF_1&=\sin\vartheta\,(-\varepsilon^{-1}\sin\vartheta\,F_2+\cos\vartheta\cos\varphi\,F_4+\cos\vartheta\sin\varphi\,F_3)\,,\\
\partial_uF_2&=\sin\vartheta\,(\varepsilon^{-1}\sin\vartheta\,F_1+\cos\vartheta\cos\varphi\,F_3-\cos\vartheta\sin\varphi\,F_4)\,,\\
\partial_uF_3&=-\sin\vartheta\,(\varepsilon^{-1}\sin\vartheta\,F_4+\cos\vartheta\cos\varphi\,F_2+\cos\vartheta\sin\varphi\,F_1)\,,\\
\partial_uF_4&=\sin\vartheta\,(\varepsilon^{-1}\sin\vartheta\,F_3-\cos\vartheta\cos\varphi\,F_1+\cos\vartheta\sin\varphi\,F_2)\,.\\
\end{aligned}
\right.
\end{equation}
Moreover, taking the derivative with respect to $u$ of \eqref{eqprime} we find two constants $\tilde{a}$ and $\tilde{b}$ such that
\begin{equation}\label{eqsegunda}\left\{\begin{aligned}
(F_1)_{uu}&=\tilde{a}\,F_1+\tilde{b}\,(F_2)_u\,,\\
(F_2)_{uu}&=\tilde{a}\,F_2-\tilde{b}\,(F_1)_u\,,\\
(F_3)_{uu}&=\tilde{a}\,F_3+\tilde{b}\,(F_4)_u\,,\\
(F_4)_{uu}&=\tilde{a}\,F_4-\tilde{b}\,(F_3)_u\,,\\
\end{aligned}
\right.
\end{equation}
where, using \eqref{eqTJ2},
$$
\tilde{a}=-\frac{\varepsilon^{-1}\sin^2\vartheta}{2}\varphi_u=\varepsilon^{-2}\sin^2\vartheta B, \qquad \tilde{b}=\varphi_u=-2 \varepsilon^{-1}\, B\,.
$$
Finally, taking twice the derivative of \eqref{eqsegunda} with respect to $u$ and using  \eqref{eqprime}--\eqref{eqsegunda} in the derivative we obtain the desired equation \eqref{eqquarta}.
\end{proof}
Integrating \eqref{eqquarta}, we have the following
\begin{corollary}\label{cor-Fuv}
Let $M^2$ be a helix surface in $\s^3_\varepsilon$ with constant angle $\vartheta$.  Then, with respect to the local coordinates $(u,v)$ on $M$ defined in \eqref{eq:local-coordinates},  the position vector $F$ of $M^2$ in $\r^4$
is given by
$$
F(u,v)=\cos(\alpha_1 u)\,g^1(v)+\sin(\alpha_1 u)\,g^2(v)+\cos(\alpha_2 u)\,g^3(v)+\sin(\alpha_2 u)\,g^4(v),
$$
where
$$
\alpha_{1,2}=\frac{1}{\varepsilon}(B\pm\varepsilon \sqrt{B} \cos\vartheta)
$$
are real constants, while the $g^i(v)$, $i\in\{1,\dots,4\}$, are mutually orthogonal vector fields  in $\r^4$, depending only on $v$, such that
$$
g_{11}=\langle g^1(v),g^1(v)\rangle=g_{22}=\langle g^2(v),g^2(v)\rangle=\frac{\varepsilon}{2B} \alpha_2\,,
$$
$$
g_{33}=\langle g^3(v),g^3(v)\rangle=g_{44}=\langle g^4(v),g^4(v)\rangle=\frac{\varepsilon}{2B} \alpha_1\,.
$$
\end{corollary}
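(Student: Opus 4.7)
The plan is to solve the ODE componentwise, then impose that $F$ lies on $\s^3 \subset \r^4$ to force orthogonality and equal norms of the four coefficient vector fields, then read off their actual lengths from the Euclidean length of $\partial_u F = T$.

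The characteristic polynomial of~\eqref{eqquarta} factors as a quadratic in $s = r^2$ with discriminant $\tilde b^2(\tilde b^2 - 4\tilde a)$. Plugging in~\eqref{eq:value-a-b} and using the identity $B - \sin^2\vartheta = \varepsilon^2\cos^2\vartheta$, the term under the radical simplifies to $4B\cos^2\vartheta$, and a short calculation shows the two values of $s$ come out to $-\alpha_1^2$ and $-\alpha_2^2$ with $\alpha_{1,2} = \varepsilon^{-1}(B \pm \varepsilon\sqrt{B}\cos\vartheta)$. Both are strictly positive since $\sqrt{B} > \varepsilon|\cos\vartheta|$ whenever $\vartheta \neq 0$, so the four characteristic roots are the purely imaginary $\pm i\alpha_1, \pm i\alpha_2$. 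Componentwise integration of~\eqref{eqquarta} then yields $F(u,v) = \cos(\alpha_1 u)g^1(v) + \sin(\alpha_1 u)g^2(v) + \cos(\alpha_2 u)g^3(v) + \sin(\alpha_2 u)g^4(v)$ for some smooth $g^i$ valued in $\r^4$.

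The orthogonality relations and the identities $|g^1| = |g^2|$, $|g^3| = |g^4|$, $|g^1|^2 + |g^3|^2 = 1$ all come from expanding $\langle F,F\rangle \equiv 1$. Product-to-sum formulas rewrite $|F|^2$ as a constant plus oscillatory terms in the frequencies $2\alpha_1, 2\alpha_2, \alpha_1+\alpha_2, \alpha_1-\alpha_2$; outside an isolated locus in $\vartheta$ these four positive frequencies are pairwise distinct, so the corresponding trigonometric functions are linearly independent and every oscillatory coefficient must vanish. Equating the coefficients of $\cos(2\alpha_i u)$ and $\sin(2\alpha_i u)$ yields the equal norms within each pair and $\langle g^1,g^2\rangle = \langle g^3,g^4\rangle = 0$; the coefficients of $\cos((\alpha_1\pm\alpha_2)u)$ and $\sin((\alpha_1\pm\alpha_2)u)$ produce the four equations $\langle g^1,g^3\rangle \pm \langle g^2,g^4\rangle = 0$ and $\langle g^1,g^4\rangle \pm \langle g^2,g^3\rangle = 0$, forcing all four remaining inner products to vanish; the constant term finally gives $|g^1|^2 + |g^3|^2 = 1$.

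To pin down the individual norms I would compute $\langle \partial_u F, \partial_u F\rangle$ in two ways. Using $\partial_u F = T$, the expression~\eqref{eq:def-T} in the frame $E_1,E_2,E_3$, and noting that the inner product restricted from $\r^4$ is the round one (so $E_1 = \varepsilon^{-1}X_1$ has Euclidean length $1/\varepsilon$ while $E_2,E_3$ are Euclidean unit), I get $\langle T,T\rangle = B\sin^2\vartheta/\varepsilon^2$. Differentiating the explicit formula for $F$ and using the orthogonality relations already in hand, the same quantity collapses to $\alpha_1^2|g^1|^2 + \alpha_2^2|g^3|^2$. Coupled with $|g^1|^2 + |g^3|^2 = 1$, the resulting $2\times 2$ linear system is inverted and, with the identities $\alpha_1 + \alpha_2 = 2B/\varepsilon$ and $\alpha_1\alpha_2 = B\sin^2\vartheta/\varepsilon^2$, simplifies to the claimed values $\varepsilon\alpha_2/(2B)$ and $\varepsilon\alpha_1/(2B)$. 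The main obstacle is the combinatorial bookkeeping in the second step: the four mixed inner products appear only through $\pm$-combinations inside the coefficients of the $\cos/\sin((\alpha_1\pm\alpha_2)u)$ modes, so one must pair those four coefficients correctly to isolate each inner product individually; everything else is routine trigonometric and algebraic manipulation in $B,\varepsilon,\cos\vartheta$.
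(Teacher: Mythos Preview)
Your argument is correct and takes a genuinely different route from the paper's. The paper derives ten scalar identities $\langle F,F\rangle=1$, $\langle F_u,F_u\rangle=\varepsilon^{-2}B\sin^2\vartheta$, $\langle F,F_u\rangle=0$, \ldots, $\langle F_{uuu},F_{uuu}\rangle=E$ from \eqref{eqprime}--\eqref{eqsegunda}, evaluates each one at $u=0$, and solves the resulting linear system in the $g_{ij}$. You instead expand the single functional identity $\langle F,F\rangle\equiv 1$ as a trigonometric polynomial in $u$ with frequencies $0,\,2\alpha_1,\,2\alpha_2,\,\alpha_1\pm\alpha_2$ and kill each nonconstant coefficient by linear independence; this is more economical, and your closing $2\times 2$ system (via $\alpha_1\alpha_2=\varepsilon^{-2}B\sin^2\vartheta$ and $\alpha_1+\alpha_2=2B/\varepsilon$) is a pleasant shortcut compared to the paper's separate $3\times 3$ blocks for $(g_{11},g_{13},g_{33})$ and $(g_{22},g_{24},g_{44})$.

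The trade-off is robustness: the paper's linear system is nonsingular as soon as $\alpha_1^2\neq\alpha_2^2$, whereas your Fourier argument needs the four nonzero frequencies to be pairwise distinct. Your parenthetical ``outside an isolated locus in $\vartheta$'' therefore leaves the single value $\cos^2\vartheta=1/(3\varepsilon^2+1)$ (where $\alpha_1=3\alpha_2$, so $2\alpha_2=\alpha_1-\alpha_2$) uncovered. To close that gap you can also expand $\langle F_u,F_u\rangle\equiv\mathrm{const}$ in the same modes---the two coefficients that merge in $|F|^2$ now carry different $\alpha_i$-weights and separate---or simply evaluate a couple of higher $u$-derivatives of $\langle F,F\rangle$ at $u=0$, which is exactly the paper's device.
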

\begin{proof}
Since \eqref{eqquarta} can be seen as an ODE in $u$ with constant coefficients, a direct integration, using the characteristic polynomial and taking into account that the integration constants depend on $v$, gives the solution
$$
F(u,v)=\cos(\alpha_1 u)\,g^1(v)+\sin(\alpha_1 u)\,g^2(v)+\cos(\alpha_2 u)\,g^3(v)+\sin(\alpha_2 u)\,g^4(v)\,,
$$
where
$$
\alpha_{1,2}=\sqrt{\frac{\tilde{b}^2-2\tilde{a}\pm\sqrt{\tilde{b}^4-4\tilde{a}\tilde{b}^2}}{2}}
$$
are two constants, while the $g^i(v)$, $i\in\{1,\dots,4\}$,  are vector fields in $\r^4$ which depend only on $v$. Now, taking into account the values of $\tilde{a}$ and $\tilde{b}$ given in \eqref{eq:value-a-b}, we obtain
$$
\alpha_{1,2}=\frac{1}{\varepsilon}(B\pm\varepsilon \sqrt{B} \cos\vartheta)\,.
$$
Next, since $|F|^2=1$ and using \eqref{eqquarta}, \eqref{eqprime} and \eqref{eqsegunda}  we find that the position vector $F(u,v)$  and its derivatives  must satisfy the relations:
\begin{equation}\label{eq:Fprocuct}
\begin{array}{lll}
\langle  F,F\rangle=1\,,& \langle F_u,F_u\rangle=\varepsilon^{-2}B\sin^2\vartheta\,,& \langle F,F_u\rangle=0\,,\\
\langle F_u,F_{uu}\rangle=0\,,&\langle F_{uu},F_{uu}\rangle=D\,,&\langle F,F_{uu}\rangle=-\varepsilon^{-2}B\sin^2\vartheta,\\
\langle F_u,F_{uuu}\rangle=-D\,,&\langle F_{uu},F_{uuu}\rangle=0\,,&\langle F,F_{uuu}\rangle=0\,,\\
\langle F_{uuu},F_{uuu}\rangle=E,& \qquad &
\end{array}
\end{equation}
where
$$
D=\varepsilon^{-2}\,B\,\tilde{b}^2\sin^2\vartheta-3\tilde{a}^2\,,\qquad E=(\tilde{b}^2-2\tilde{a})\,D-\varepsilon^{-2}\,B\,\tilde{a}^2\sin^2\vartheta\,.
$$
Putting  $g_{ij}(v)=\langle g^i(v),g^j(v)\rangle$, and evaluating  the relations \eqref{eq:Fprocuct}  in $(0,v)$, we obtain:
\begin{equation}\label{uno}
    g_{11}+g_{33}+2g_{13}=1\,,
\end{equation}
\begin{equation}\label{due}
    \alpha_1^2\,g_{22}+\alpha_2^2\,g_{44}+2\alpha_1\alpha_2\,g_{24}=\varepsilon^{-2}B\sin^2\vartheta\,,
\end{equation}
\begin{equation}\label{tre}
    \alpha_1\,g_{12}+\alpha_2\,g_{14}+\alpha_1\,g_{23}+\alpha_2\,g_{34}=0\,,
\end{equation}
\begin{equation}\label{quatro}
    \alpha_1^3\,g_{12}+\alpha_1\alpha_2^2\,g_{23}+\alpha_1^2\alpha_2\,g_{14}+\alpha_2^3g_{34}=0\,,
\end{equation}
\begin{equation}\label{cinque}
    \alpha_1^4\,g_{11}+\alpha_2^4\,g_{33}+2\alpha_1^2\alpha_2^2\,g_{13}=D\,,
\end{equation}
\begin{equation}\label{sei}
    \alpha_1^2\,g_{11}+\alpha_2^2\,g_{33}+(\alpha_1^2+\alpha_2^2)\,g_{13}=\varepsilon^{-2}B\sin^2\vartheta\,,
\end{equation}
\begin{equation}\label{sette}
    \alpha_1^4\,g_{22}+\alpha_1^3\alpha_2\,g_{24}+\alpha_1\alpha_2^3\,g_{24}+\alpha_2^4\,g_{44}=D\,,
\end{equation}
\begin{equation}\label{otto}
    \alpha_1^5\,g_{12}+\alpha_1^3\alpha_2^2\,g_{23}+\alpha_1^2\alpha_2^3\,g_{14}+\alpha_2^5\,g_{34}=0\,,
\end{equation}
\begin{equation}\label{nove}
    \alpha_1^3\,g_{12}+\alpha_1^3\,g_{23}+\alpha_2^3\,g_{14}+\alpha_2^3\,g_{34}=0\,,
\end{equation}
\begin{equation}\label{dieci}
    \alpha_1^6\,g_{22}+\alpha_2^6\,g_{44}+2\alpha_1^3\alpha_2^3\,g_{24}=E\,.
\end{equation}

From \eqref{tre}, \eqref{quatro}, \eqref{otto}, \eqref{nove}, it follows that
$$
g_{12}=g_{14}=g_{23}=g_{34}=0\,.
$$
Also, from  \eqref{uno}, \eqref{cinque} and \eqref{sei}, we obtain
$$
g_{11}=\frac{\varepsilon^2\,(D+\alpha_2^4)-2B\sin^2\vartheta\,\alpha_2^2}{\varepsilon^2(\alpha_1^2-\alpha_2^2)^2}\,,\qquad g_{13}=0\,,\qquad
g_{33}=\frac{\varepsilon^2\,(D+\alpha_1^4)-2B\sin^2\vartheta\,\alpha_1^2}{\varepsilon^2(\alpha_1^2-\alpha_2^2)^2}.
$$
Moreover, using \eqref{due}, \eqref{sette} and \eqref{dieci}, we get
$$
g_{22}=\frac{\varepsilon^2\,(E-2D\alpha_2^2)+B\sin^2\vartheta\,\alpha_2^4}{\varepsilon^2\alpha_1^2\,(\alpha_1^2-\alpha_2^2)^2}\,,\quad g_{24}=0\,,\qquad
g_{44}=\frac{\varepsilon^2\,(E-2D\alpha_1^2)+B\sin^2\vartheta\,\alpha_1^4}{\varepsilon^2\alpha_2^2\,(\alpha_1^2-\alpha_2^2)^2}\,.
$$
Finally, a long but straightforward computation gives
$$
g_{11}=g_{22}=\frac{\varepsilon}{2B} \alpha_2\,,\qquad g_{33}=g_{44}=\frac{\varepsilon}{2B} \alpha_1
\,.
$$
\end{proof}

\section{The main result}
We are now in the right position to state the main result of the paper. Before doing this we recall that, looking at $\s^3_{\varepsilon}$ in $\r^4$,
its isometry group can be identified with:
$$
\{A\in \mathrm{O}(4)\colon A J_{1}=\pm  J_{1}A\}\,,
$$
where $ J_{1}$ is the complex structure of $\r^4$ defined by
$$
 J_{1} =\begin{pmatrix}
 0&-1 & 0 & 0 \\
 1&0 & 0 & 0 \\
 0&0 & 0 & -1 \\
  0&0 & 1 & 0 \\
 \end{pmatrix},
 $$
while $\mathrm{O}(4)$ is the orthogonal group (see, for example, \cite{t}). Suppose now we are given a 1~-~parameter family $A(v)\,, v\in(a,b)\subset\r$, consisting of $4\times 4$ orthogonal matrices commuting with $J_1$. In order to describe explicitly the family $A(v)$, we shall use two others complex structures of $\r^4$, namely
$$
 J_{2} =\begin{pmatrix}
 0&0 & 0 & -1 \\
 0&0 & -1 & 0 \\
 0&1& 0 & 0 \\
  1&0 & 0 & 0 \\
 \end{pmatrix}\,,\qquad
 J_{3} =\begin{pmatrix}
 0&0 & -1 &0 \\
 0&0 & 0 & 1 \\
 1&0& 0 & 0 \\
0&-1 & 0 & 0 \\
 \end{pmatrix}\,.
 $$
Since $A(v)$ is an orthogonal matrix the first row must be a unit vector ${\mathbf r}_1(v)$ of $\r^4$ for all $v\in(a,b)$. Thus, with out loss of generality, we can take
$$
{\mathbf r}_1(v)=(\cos\xi_1(v)\cos\xi_2(v), -\cos\xi_1(v)\sin\xi_2(v), \sin\xi_1(v)\cos\xi_3(v),-\sin\xi_1(v)\sin\xi_3(v))\,,
$$
for some real functions $\xi_1,\xi_2$ and $\xi_3$ defined in $(a,b)$. Since $A(v)$ commutes with $J_1$ the second row of $A(v)$ must be ${\mathbf r}_2(v)=J_{1}{\mathbf r}_1(v)$. Now, the four vectors $\{{\mathbf r}_1, J_1{\mathbf r}_1, J_2{\mathbf r}_1,J_3{\mathbf r}_1\}$ form an orthonormal basis of $\r^4$, thus the third row ${\mathbf r}_3(v)$ of $A(v)$ must be a linear combination of them. Since ${\mathbf r}_3(v)$ is unit and it is orthogonal to both ${\mathbf r}_1(v)$ and $J_1{\mathbf r}_1(v)$, there exists a function $\xi(v)$ such that
$$
{\mathbf r}_3(v)=\cos\xi(v) J_2 {\mathbf r}_1(v)+\sin\xi(v) J_3 {\mathbf r}_1(v)\,.
$$
Finally the fourth row of $A(v)$ is ${\mathbf r}_4(v)=J_1{\mathbf r}_3(v)=-\cos\xi(v) J_3 {\mathbf r}_1(v)+\sin\xi(v) J_2 {\mathbf r}_1(v)$.
This means that any $1$-parameter family $A(v)$ of $4\times 4$ orthogonal matrices commuting with $J_1$
can be described by four functions $\xi_1,\xi_2,\xi_3$ and $\xi$ as
\begin{equation}\label{eq-descrizione-A}
A(\xi,\xi_1,\xi_2,\xi_3)(v)=
\begin{pmatrix}
{\mathbf r}_1(v)\\
J_1{\mathbf r}_1(v)\\
\cos\xi(v) J_2 {\mathbf r}_1(v)+\sin\xi(v) J_3 {\mathbf r}_1(v)\\
-\cos\xi(v) J_3 {\mathbf r}_1(v)+\sin\xi(v) J_2 {\mathbf r}_1(v)
\end{pmatrix}\,.
\end{equation}

\begin{theorem}\label{teo-principal}
Let $M^2$ be a helix surface in the Berger sphere $\s^3_\varepsilon$ with constant angle $\vartheta\neq\pi/2$. Then, locally, the position vector of $M^2$ in $\r^4$, with respect to the local coordinates $(u,v)$ on $M$ defined in \eqref{eq:local-coordinates}, is
$$
F(u,v)=A(v)\,\beta(u)\,,
$$
where
\begin{equation}\label{eq-beta-main-theorem}
\beta(u)=(\sqrt{g_{11}}\,\cos (\alpha_1 u),\sqrt{g_{11}}\,\sin (\alpha_1 u),\sqrt{g_{33}}\,\cos (\alpha_2 u),\sqrt{g_{33}}\,\sin (\alpha_2 u))
\end{equation}
is a geodesic in the torus $\s^1(\sqrt{g_{11}})\times\s^1(\sqrt{g_{33}})\subset \s^3$, with $g_{11}$, $g_{33}$, $\alpha_1$, $\alpha_2$ the four constants given in Corollary~\ref{cor-Fuv}, and $A(v)=A(\xi,\xi_1,\xi_2,\xi_3)(v)$ is a 1-parameter family of $4\times 4$ orthogonal matrices commuting with $J_1$, as described in \eqref{eq-descrizione-A}, with $\xi=\cst$ and
\begin{equation}\label{eq-alpha123}
  \cos^2(\xi_1(v)) \,\xi_{2}'(v)-\sin^2(\xi_1(v))\, \xi_{3}'(v)=0\,.
\end{equation}
Conversely, a parametrization $F(u,v)=A(v)\,\beta(u)$, with $\beta(u)$ and $A(v)$ as above, defines
 a helix surface in the Berger sphere $\s^3_\varepsilon$ with constant angle $\vartheta\neq\pi/2$.
\end{theorem}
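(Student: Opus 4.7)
The plan is to assemble the theorem directly from Corollary \ref{cor-Fuv}, enriched by the Berger-sphere specific structure. The Corollary already produces
\[
F(u,v)=\cos(\alpha_1 u)\,g^1(v)+\sin(\alpha_1 u)\,g^2(v)+\cos(\alpha_2 u)\,g^3(v)+\sin(\alpha_2 u)\,g^4(v),
\]
with the $g^i(v)$ mutually orthogonal and of the norms recorded there. First I would normalise $\tilde g^i(v):=g^i(v)/|g^i(v)|$ and assemble them as the columns of an orthogonal matrix $A(v)\in\mathrm{O}(4)$. Since $g_{11}+g_{33}=\frac{\varepsilon}{2B}(\alpha_1+\alpha_2)=1$, a direct calculation yields $F(u,v)=A(v)\beta(u)$ with $|\beta|\equiv 1$, and $\beta(u)$ is precisely the unit-speed linear curve \eqref{eq-beta-main-theorem} on the Clifford torus $\s^1(\sqrt{g_{11}})\times\s^1(\sqrt{g_{33}})$, that is, a geodesic of that torus.

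The next, and structurally most telling, step is to show $A(v)J_1=J_1A(v)$. Setting $z:=F_1+iF_2$, the first two equations of the system \eqref{eqsegunda} collapse to the complex linear ODE $z_{uu}+i\tilde b\,z_u-\tilde a\,z=0$, whose characteristic roots are exactly $i\alpha_1$ and $i\alpha_2$; hence $z$ may contain only the modes $e^{i\alpha_1 u}, e^{i\alpha_2 u}$ and none of their complex conjugates. Expanding $z$ through the Corollary formula and annihilating the coefficients of $e^{-i\alpha_j u}$ forces $g^1_1=g^2_2$, $g^1_2=-g^2_1$, $g^3_1=g^4_2$, $g^3_2=-g^4_1$. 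The same argument applied to $w:=F_3+iF_4$ with the remaining pair in \eqref{eqsegunda} yields the analogous identities on the third and fourth components; together they amount to $J_1g^1=g^2$ and $J_1g^3=g^4$. Passing to the normalised columns of $A(v)$, this is precisely $A(v)J_1=J_1A(v)$, so $A(v)$ admits the parametrisation $A(\xi,\xi_1,\xi_2,\xi_3)(v)$ of \eqref{eq-descrizione-A}.

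The data not yet exploited are $\varphi_v=0$ from \eqref{eqTJ2} and the prescription $F_v=aT+bJT$ with $a,b$ given by \eqref{solab}. Substituting $T=F_u=A\beta'$ and $JT=\sin\vartheta(\sin\varphi\,J_2F-\cos\varphi\,J_3F)$ transforms this into the single matrix identity
\[
(A^{-1}A')(v)\,\beta(u)=a(u,v)\,\beta'(u)+b(u,v)\sin\vartheta\bigl(\sin\varphi\,A^{-1}J_2A-\cos\varphi\,A^{-1}J_3A\bigr)(v)\,\beta(u).
\]
The left-hand side carries no explicit $u$-dependence while the right-hand side is an explicit trigonometric polynomial in $u$; matching Fourier modes in $u$ gives a system of ODEs in $v$ on the four functions $\xi,\xi_1,\xi_2,\xi_3$. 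Carrying out this matching is the main technical obstacle, but once the closed-form expressions of $A^{-1}J_2A$ and $A^{-1}J_3A$ (depending only on $\xi_1,\xi_2,\xi_3,\xi$) are inserted, the system simplifies cleanly to $\xi'(v)\equiv 0$, that is $\xi=\cst$, together with the single first-order relation \eqref{eq-alpha123} on $\xi_1,\xi_2,\xi_3$.

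For the converse, I would take $F(u,v)=A(v)\beta(u)$ with $A,\beta$ as in the statement and verify the helix property directly. Clearly $|F|\equiv 1$ (since $g_{11}+g_{33}=1$), and a short check shows $F_u,F_v$ are linearly independent and tangent to $\s^3_\varepsilon$. Using $X_1|_F=J_1F$ together with $AJ_1=J_1A$, the computation of $g_\varepsilon(E_1,N)=\varepsilon\langle J_1F,N\rangle$ reduces to an expression in $\beta,\beta'$ and the functions $\xi_1,\xi_2,\xi_3$ and the constant $\xi$; the relation \eqref{eq-alpha123} together with $\xi=\cst$ are precisely what force this expression to equal the constant $\cos\vartheta$, establishing the helix condition.
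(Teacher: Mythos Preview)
Your approach is largely sound and differs from the paper in interesting ways. The proof that $A(v)$ commutes with $J_1$ via the complex scalar ODE $z_{uu}+i\tilde b\,z_u-\tilde a\,z=0$ (and its companion for $w=F_3+iF_4$) is more direct than the paper's route: there the commutation is obtained by computing the matrix $\bar J_{ij}=\langle J_1e_i,e_j\rangle$ through a system of scalar identities derived from $J_1F=\varepsilon(F_u+\cos\vartheta\,N)$ and the relations \eqref{eq:Fprocuct}, all evaluated at $u=0$, eventually showing $\langle J_1e_1,e_2\rangle=\langle J_1e_3,e_4\rangle=1$ and the remaining entries vanish. Your frequency-selection argument reaches the same conclusion with considerably less bookkeeping.

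For the conditions on $\xi,\xi_1,\xi_2,\xi_3$, the paper does \emph{not} perform the full Fourier matching of $F_v=aT+bJT$ that you propose. Instead it extracts two specific scalar consequences: first $\partial_u\langle F_v,F_v\rangle_{|u=0}=0$, which yields a system $\xi' h=\xi' k=0$; and second $g_\varepsilon(F_v,J_1F)-\varepsilon\, g_\varepsilon(F_v,F_u)=0$, which gives \eqref{eq-alpha123} directly. The first relation alone does not immediately force $\xi'=0$: one must exclude the alternative $h=k=0$, and the paper does this by showing that branch makes $N_1=0$, i.e.\ the Hopf vector field tangent to the surface, contradicting $\vartheta\neq\pi/2$. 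Your full Fourier matching would in principle produce more equations and might bypass this dichotomy, but you should not take the ``clean simplification'' on faith --- either carry the matching through explicitly, or adopt the paper's two scalar identities and be prepared to rule out the degenerate branch. The converse in both your outline and the paper is a direct computation of $g_\varepsilon(E_1,N)^2/g_\varepsilon(N,N)$, so no substantive difference there.
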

\begin{proof}
With respect to the local coordinates $(u,v)$ on $M$ defined in \eqref{eq:local-coordinates}, Corollary~\ref{cor-Fuv} implies that the position vector of a  helix  surface in $\r^4$ is given by
$$
F(u,v)=\cos(\alpha_1 u)\,g^1(v)+\sin(\alpha_1 u)\,g^2(v)+\cos(\alpha_2 u)\,g^3(v)+\sin(\alpha_2 u)\,g^4(v)\,,
$$
where the vector fields  $\{g^i(v)\}$ are mutually orthogonal  and
$$
||g^1(v)||=||g^2(v)||=\sqrt{g_{11}}=\text{constant}\,,
$$
$$
||g^3(v)||=||g^4(v)||=\sqrt{g_{33}}=\text{constant}\,.
$$
Thus, if we put $e_i(v)=g^i(v)/||g^i(v)||$, $i\in\{1,\dots,4\}$, we can write:
\begin{equation}\label{eq:Fei}
F(u,v)=\sqrt{g_{11}}\,(\cos (\alpha_1\,u)\,e_1(v)+\sin(\alpha_1\,u)\,e_2(v))  +\sqrt{g_{33}}\,(\cos (\alpha_2\,u)\,e_3(v)+\sin(\alpha_2\,u)\,e_4(v)\,.
\end{equation}
Denote by $\bar{J}$ the $4\times 4$ matrix with entries
$\bar{J}_{i,j}=\langle  J_{1} e_i,e_j\rangle$, $i,j=1,\ldots,4$. We shall prove that
$\bar{J}=( J_{1})^{T}$. For this, since
$$
 J_{1}F(u,v)={X_1}_{|F(u,v)}=\varepsilon\,{E_1}_{|F(u,v)}=\varepsilon\,(F_u+\cos\vartheta\,N),
$$
and using \eqref{eqquarta}--\eqref{eq:Fprocuct},  we obtain the following identities
\begin{equation}\label{eq-fu-jf-main}
\begin{aligned}
&\langle J_{1}F,F_u\rangle=\varepsilon^{-1}\sin^2\vartheta,\\&\langle J_{1}F,F_{uu}\rangle=0\,,\\
&\langle F_u, J_{1}F_{uu}\rangle=\varepsilon^{-3}B\sin^2\vartheta\,(\sin^2\vartheta-2B):=I\,,\\
&\langle J_{1}F_u,F_{uuu}\rangle=0\,,\\
&\langle J_{1}F_u,F_{uu}\rangle+\langle J_{1}F,F_{uuu}\rangle=0\,,\\
&\langle J_{1}F_{uu},F_{uuu}\rangle+\langle J_{1}F_u,F_{uuuu}\rangle=0\,.
\end{aligned}
\end{equation}
Indeed, since $g_{\varepsilon}(F_u,E_1)=\varepsilon\langle F_u, J_1 F\rangle$ and, from \eqref{eq-F2-as-Ei}, $g_{\varepsilon}(F_u,E_1)=\sin^2\vartheta$ we obtain the first  of \eqref{eq-fu-jf-main}, from which, deriving with respect to $u$, it follows the second and the fifth.
Also,  \eqref{eqsegunda} implies
$$
\langle F_u, J_{1}F_{uu}\rangle=\tilde{a}\,\langle J_{1}F,F_u\rangle+\tilde{b}\,|F_u|^2=I.
$$
The fourth of \eqref{eq-fu-jf-main} is a consequence of the fifth, in view of
$$\langle J_{1}F,F_{uuuu}\rangle=-\langle J_1 F,(\tilde{b}^2-2\tilde{a})\,F_{uu}+\tilde{a}^2\,F \rangle=0.$$
Finally deriving the fourth of \eqref{eq-fu-jf-main} with respect to $u$ we obtain the last equation of \eqref{eq-fu-jf-main}.\\

Evaluating \eqref{eq-fu-jf-main} in  $(0,v)$, they become respectively:
\begin{equation}\label{eq1bis}
    \alpha_1 g_{11}\langle J_{1}e_1,e_2\rangle+\alpha_2\,g_{33}\langle J_{1}e_3,e_4\rangle+\sqrt{g_{11}g_{33}}\,(\alpha_1\langle J_{1}e_3,e_2\rangle+\alpha_2\langle J_{1}e_1,e_4\rangle)
    =\varepsilon^{-1}\sin^2\vartheta,
\end{equation}
\begin{equation}
     \langle J_{1}e_1,e_3\rangle=0\,,
\end{equation}
\begin{equation}\label{39}
\alpha_1^3\,g_{11}\langle J_{1}e_1,e_2\rangle+\alpha_2^3\,g_{33}\langle J_{1}e_3,e_4\rangle+\sqrt{g_{11}g_{33}}\,(\alpha_1\alpha_2^2\langle J_{1}e_3,e_2\rangle+\alpha_1^2\alpha_2\langle J_{1}e_1,e_4\rangle)=-I,
\end{equation}
\begin{equation}
    \langle J_{1}e_2,e_4\rangle=0\,,
\end{equation}
\begin{equation}\label{eq2bis}
    \alpha_1\langle J_{1}e_2,e_3\rangle+\alpha_2\langle J_{1}e_1,e_4\rangle=0\,,
\end{equation}
\begin{equation}\label{eq3bis}
    \alpha_2\langle J_{1} e_2,e_3\rangle+\alpha_1\langle J_{1}e_1,e_4\rangle=0\,.
\end{equation}
We point out that  to obtain the previous identities we have divided by $\alpha_1^2-\alpha_2^2=4 \varepsilon^{-1} \sqrt{B^3 \cos^2\vartheta}$ which is, by the assumption on $\vartheta$, always different from zero.
From \eqref{eq2bis} and \eqref{eq3bis}, taking into account that $\alpha_1^2-\alpha_2^2\neq 0$, it results that
\begin{equation}\label{eq4bis}
     \langle J_{1}e_3,e_2\rangle=0\,,\qquad \langle J_{1}e_1,e_4\rangle=0\,.
\end{equation}
Therefore
$$
|\langle J_{1}e_1,e_2\rangle|=1=|\langle J_{1}e_3,e_4\rangle|.
$$
Substituting \eqref{eq4bis} in \eqref{eq1bis} and \eqref{39}, we obtain the system
\begin{equation}\label{j34}
\left\{\begin{aligned}
&\alpha_1 g_{11}\langle J_{1}e_1,e_2\rangle+\alpha_2\,g_{33}\langle J_{1}e_3,e_4\rangle=\varepsilon^{-1}\sin^2\vartheta\\
 & \alpha_1^3 g_{11}\langle J_{1}e_1,e_2\rangle+\alpha_2^3\,g_{33}\langle J_{1}e_3,e_4\rangle=-I\,,
\end{aligned}
\right.
\end{equation}
a solution of which is
$$
\langle J_{1}e_1,e_2\rangle=\frac{\varepsilon I+\alpha_2^2\sin^2\vartheta}{\varepsilon g_{11}\,\alpha_1 (\alpha_2^2-\alpha_1^2)}\,,\qquad \langle J_{1}e_3,e_4\rangle=-\frac{\varepsilon I+\alpha_1^2\sin^2\vartheta}{\varepsilon g_{33}\,\alpha_2(\alpha_2^2-\alpha_1^2)}\,.
$$
Now, as
$$
g_{11}\,g_{33}=\frac{\sin^2\vartheta}{4B}\,,\qquad \alpha_1\,\alpha_2=\frac{B}{\varepsilon^2}\sin^2\vartheta\,,\qquad (\alpha_1^2-\alpha_2^2)^2=\frac{16B^3}{\varepsilon^{2}}\cos^2\vartheta\,,
$$
it results that
$$
\langle J_{1}e_1,e_2\rangle\langle J_{1}e_3,e_4\rangle=1\,.
$$
Moreover, a direct check shows that $\langle  J_{1}e_1,e_2\rangle>0$.
Consequently,
$\langle  J_{1}e_1,e_2\rangle=\langle  J_{1}e_3,e_4\rangle=1$.
We have thus proved that $\bar{J}=( J_{1})^{T}$. \\

If we fix the canonical orthonormal basis  of $\r^4$ given by
$$
\bar{e}_1=(1,0,0,0))\,,\quad \bar{e}_2=(0,1,0,0)\,,\quad \bar{e}_3=(0,0,1,0)\,,\quad \bar{e}_4=(0,0,0,1)\,,
$$
there must exists a 1-parameter family of $4\times 4$ orthogonal matrices $A(v)\in \mathrm{O}(4)$, with $ J_{1}A(v)=A(v) J_{1}$, such that $e_i(v)=A(v)\bar{e}_i$.
Replacing $e_i(v)=A(v)\bar{e}_i$ in \eqref{eq:Fei} we obtain
$$
F(u,v)=A(v)\beta(u)\,,
$$
where the curve
$$
\beta(u)=(\sqrt{g_{11}}\,\cos (\alpha_1 u),\sqrt{g_{11}}\,\sin (\alpha_1 u),\sqrt{g_{33}}\,\cos (\alpha_2 u),\sqrt{g_{33}}\,\sin (\alpha_2 u))\,,
$$
is clearly a geodesic of the torus $\s^1(\sqrt{g_{11}})\times\s^1(\sqrt{g_{33}})\subset \s^3$.\\

Let now examine the $1$-parameter family $A(v)$ that, according to \eqref{eq-descrizione-A},
depends on four functions $\xi_1(v),\xi_2(v),\xi_3(v)$ and $\xi(v)$. From \eqref{eq-definition-Fv}, and taking into account \eqref{solab}, it results that $\langle F_v, F_v\rangle=\sin^2 \vartheta=\cst$. The latter implies that
\begin{equation}\label{eq-fv-fv-sin-theta-d-u}
\frac{\partial}{\partial u}\langle F_v, F_v\rangle_{| u=0}=0\,.
\end{equation}
Now, if we denote by ${\mathbf c_1},{\mathbf c_2},{\mathbf c_3},{\mathbf c_4}$ the four colons of $A(v)$, \eqref{eq-fv-fv-sin-theta-d-u} implies that
\begin{equation}\label{sistem-c23-c24}
\begin{cases}
\langle {\mathbf c_2}',{\mathbf c_3}'\rangle=0\\
\langle {\mathbf c_2}',{\mathbf c_4}'\rangle=0\,,
\end{cases}
\end{equation}
where with $'$ we means the derivative with respect to $v$.
Replacing in \eqref{sistem-c23-c24} the expressions of the ${\mathbf c_i}$'s as functions of $\xi_1(v),\xi_2(v),\xi_3(v)$ and $\xi(v)$, we obtain
\begin{equation}\label{sistemK-H}
\begin{cases}
\xi'\, h(v)=0\\
\xi'\, k(v)=0\,,
\end{cases}
\end{equation}
where $h(v)$ and $k(v)$ are two functions such that
$$
h^2+k^2=4 (\xi_1')^2+\sin^2(2\xi_1)\, (-\xi'+\xi_2'+\xi_3')^2\,.
$$
Thus we have two possibilities:
\begin{itemize}
\item[(i)] $\xi=\cst$;
\item[] or
\item[(ii)] $4 (\xi_1')^2+\sin^2(2\xi_1)\, (-\xi'+\xi_2'+\xi_3')^2=0$.
\end{itemize}
We will show that case (ii) cannot occurs, more precisely we will show that if (ii) happens  than the parametrization $F(u,v)=A(v)\beta(u)$ defines a Hopf tube, that is the Hopf vector field $E_1$ is tangent to the surface. To this end let's compute the unit normal vector field $N$ to the parametrization
$F(u,v)=A(v) \beta(u)$. If we put
$$
F_u=g_{\varepsilon}(F_u,E_1)E_1+g_{\varepsilon}(F_u,E_2)E_2+g_{\varepsilon}(F_u,E_3)E_3
$$
and
$$
F_v=g_{\varepsilon}(F_v,E_1)E_1+g_{\varepsilon}(F_v,E_2)E_2+g_{\varepsilon}(F_v,E_3)E_3\,,
$$
where $\{E_1,E_2,E_3\}$ is the global tangent frame to $\s^3_\varepsilon$
defined in \eqref{eq-basis}, then
$$
N=\frac{N_1 E_1+N_2E_2+N_3 E_3}{\sqrt{N_1^2+N_2^2+N_3^2}}\,,
$$
where
\begin{equation}\label{eq-ni-normal}
\begin{cases}
N_1=g_{\varepsilon}(F_u,E_2)g_{\varepsilon}(F_v,E_3)-g_{\varepsilon}(F_u,E_3)g_{\varepsilon}(F_v,E_2),\\
N_2=g_{\varepsilon}(F_u,E_3)g_{\varepsilon}(F_v,E_1)-g_{\varepsilon}(F_u,E_1)g_{\varepsilon}(F_v,E_3),\\
N_3=g_{\varepsilon}(F_u,E_1)g_{\varepsilon}(F_v,E_2)-g_{\varepsilon}(F_u,E_2)g_{\varepsilon}(F_v,E_1)\,.
\end{cases}
\end{equation}
A long but straightforward computation (that can be also made using a software of symbolic computations) gives
$$
N_1=1/2 (\alpha_1 - \alpha_2) \sqrt{g_{11}} \sqrt{g_{33}}
  \sin(2\xi_1)\sin(\alpha_1 u-\alpha_2 u+\xi_2-\xi_2)( -\xi'+\xi_2'+\xi_3')\,.
$$
Now case (ii) occurs if and only if $\xi_1=\cst=k \pi/2,\, k\in\z$, or if $\xi_1=\cst\neq k \pi/2,\, k\in\z$ and $-\xi'+\xi_2'+\xi_3'=0$. In both cases $N_1=0$ and this implies that $g_{\varepsilon}(N,J_1F)=\varepsilon g_{\varepsilon}(N,E_1)=0$, i.e. the Hopf vector field is tangent to the surface. Thus we have proved that $\xi=\cst$. \\

Now, using \eqref{eq-definition-Fv}, we find
that  $g_{\varepsilon}(F_v,J_1F/\varepsilon)=a \sin^2\vartheta$ and similarly
$g_{\varepsilon}(F_v,F_u)=a \sin^2\vartheta$. This implies that
$$
g_{\varepsilon}(F_v,J_1F)-\varepsilon g_{\varepsilon}(F_v,F_u)=0\,.
$$
Using again a software of symbolic computations one can easily compute
$g_{\varepsilon}(F_v,J_1F)-\varepsilon g_{\varepsilon}(F_v,F_u)$, when $F(u,v)=A(v)\beta(u)$, and find
$$
0=g_{\varepsilon}(F_v,J_1F)-\varepsilon g_{\varepsilon}(F_v,F_u)
=-\varepsilon \cos\vartheta\sqrt{B}[\cos^2(\xi_1(v)) \,\xi_{2}'(v)-\sin^2(\xi_1(v))\, \xi_{3}'(v)].
$$
Since $\vartheta\neq\pi/2$ we conclude that condition \eqref{eq-alpha123} is satisfied.\\

The converse of the theorem can be proved in the following way.  Let $F(u,v)=A(v)\beta(u)$ be a parametrization of a surface in $\s^3_{\varepsilon}$ with $\beta(u)$ given as in \eqref{eq-beta-main-theorem} and $A(v)=A(\xi(v),\xi_1(v),\xi_2(v),\xi_3(v))$ with $\xi=\cst$ and $\xi_1(v),\xi_2(v),\xi_3(v)$ satisfying \eqref{eq-alpha123}. Then the $N_i$'s of the normal vector field described in \eqref{eq-ni-normal} become (again to perform this computation it is better to use a software of symbolic computations and note we have used that $g_{11}=1-g_{33}$):

\begin{equation}\label{eq-ni-normal-viceversa}
\begin{cases}
N_1=(1/\varepsilon) B \sqrt{(1 - g_{33}) g_{33}} ( 2 g_{33}-1)\; \zeta,\\
N_2=2B(g_{33}-1) g_{33} \sin[(2 B u)/\varepsilon + \alpha]\; \zeta,\\
N_3=2B(g_{33}-1) g_{33} \cos[(2 B u)/\varepsilon + \alpha]\; \zeta\,,
\end{cases}
\end{equation}
where
\begin{align*}
\zeta=
2 \xi_1'(v) \cos& \left(\frac{2 B (1-2 {g_{33}})
   u}{\varepsilon}-\xi_2+\xi_3\right)\\
   &-\sin (2\xi_1) \left(\xi_2'+\xi_3'\right) \sin \left(\frac{2 B (1-2 {g_{33}}) u}{\varepsilon}-\xi_2+\xi_3\right)\,.
\end{align*}

Finally, replacing the values of $B$ and of $g_{33}$, we obtain
\begin{equation}\label{final-count-theta}
\frac{g_{\varepsilon}(N,E_1)^2}{g_{\varepsilon}(N,N)}=\frac{N_1^2}{{N_1^2+N_2^2+N_3^2}}=\cos^2\vartheta\,,
\end{equation}
which implies that $F(u,v)$ defines a helix surface with constant angle $\vartheta$.
\end{proof}

\begin{remark}
The geodesic $\beta$ of the torus $\s^1(\sqrt{g_{11}})\times\s^1(\sqrt{g_{33}})\subset \s^3$ in Theorem~\ref{teo-principal} has slope
$$
m=\frac{\alpha_2}{\alpha_1}=\frac{\sqrt{B}-\varepsilon\cos\vartheta}{\sqrt{B}+\varepsilon\cos\vartheta}
$$
that, for fixed $\varepsilon>0$, varying $\vartheta\in(0,\pi/2)$  it can assume all possible values in $(0,1)$.
\end{remark}

\begin{example}\label{ex-mainteo}
We shall now find an explicit example of a  1-parameter family $A(v)$ as in Theorem~\ref{teo-principal}.
Since $\xi=\cst$ and $\xi_1(v),\xi_2(v),\xi_3(v)$ are solutions of \eqref{eq-alpha123}
we can take
$$
\xi=\pi/2\,,\quad \xi_1=\pi/4\,,\quad \xi_2(v)=\xi_3(v)\,.
$$
Then $A(v)$ becomes
$$
A(v)=\frac{1}{\sqrt{2}}\begin{pmatrix}
  \cos \xi_2(v) & -\sin \xi_2(v) & \cos \xi_2(v) & -\sin \xi_2(v) \\
 \sin \xi_2(v)& \cos \xi_2(v) & \sin \xi_2(v) & \cos \xi_2(v)) \\
 -\cos \xi_2(v) & -\sin \xi_2(v) & \cos \xi_2(v) & \sin \xi_2(v) \\
 \sin\xi_2(v) & -\cos \xi_2(v) & -\sin \xi_2(v) & \cos \xi_2(v)
\end{pmatrix}\,.
$$
Using the notation of Theorem~\ref{teo-principal} the map
$$
F(u,v)=A(v) \beta(u)\,,
$$
gives an explicit immersion of a helix surface into the Berger sphere. In Figure~\ref{fig-example}
we have plotted the stereographic projection in $\r^3$ of the surface parametrized by $F$ in the case $\varepsilon=1$ and the constant angle is $\vartheta=\pi/4$.
\end{example}

\begin{figure}[h!]
\centering
\includegraphics[width=.4\textwidth]{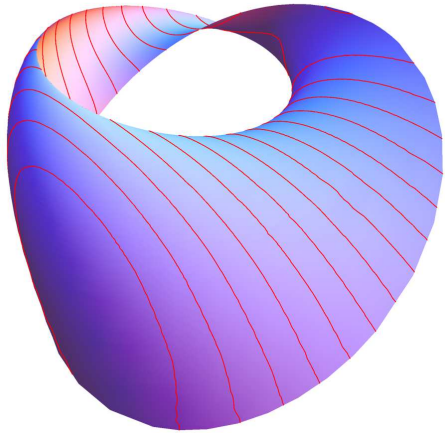}

\caption{\small Stereographic projection in $\r^3$ of the helix surface, of constant angle $\pi/4$, in $\s^3$ given by $F$ in Example~\ref{ex-mainteo}.}
\label{fig-example}
\end{figure}


\begin{thebibliography}{12}
\bibitem{B}
B.~Daniel.
\newblock Isometric immersions into $3$-dimensional homogeneous manifolds,
\newblock {\em Comment. Math. Helv.} 82, (2007), 87-131.


\bibitem{CDS07} P.~Cermelli, A. J.~Di Scala. Constant-angle surfaces in liquid crystals. {\em Phil. Mag.} 87 (2007), 1871--1888.

\bibitem{DM09} F.~Dillen, M.I.~Munteanu. Constant angle surfaces in $\h^2\times\r$. {\em Bull. Braz. Math. Soc. (N.S.)} 40 (2009), 85--97.

\bibitem{DFVV07} F.~Dillen, J.~Fastenakels, J.~Van der Veken, L.~Vrancken. Constant angle surfaces in $\s^2\times\r$. {\em Monatsh. Math.} 152 (2007), 89--96.

\bibitem{DSRH10} A.~Di Scala, G.~Ruiz-Hern\'andez. Higher codimensional Euclidean helix submanifolds. {\em Kodai Math. J.} 33 (2010), 192--210.

\bibitem{DSRH09} A.~Di Scala, G.~Ruiz-Hern\'andez. Helix submanifolds of Euclidean spaces. {\em Monatsh. Math.} 157 (2009), 205--215.

\bibitem{FMV11} J.~ Fastenakels, M.I.~ Munteanu, J.~Van Der Veken. Constant angle surfaces in the Heisenberg group. {\em Acta Math. Sin. (Engl. Ser.)} 27 (2011), 747--756.

\bibitem{LM11} R.~L\'opez, M.I.~Munteanu. On the geometry of constant angle surfaces in $Sol_3$. {\em Kyushu J. Math.} 65 (2011), 237--249.

\bibitem{McDuff} D.~ McDuff, D.~Salamon. Introduction to symplectic topology. {\em Oxford Mathematical Monographs}, Oxford University Press, New York, 1998.

\bibitem{Reeb} G.~Reeb. Sur certaines propri\'et\'es topologiques des trajectoires des syst\'emes dynamiques.  Acad. Roy. Belgique. Cl. Sci. M\'em. 27 (1952).

\bibitem{RH11} G.~ Ruiz-Hern\'andez. Minimal helix surfaces in $N^n\times\r$. {\em Abh. Math. Semin. Univ. Hambg.} 81 (2011), 55--67.

\bibitem{t} F.~Torralbo. Compact minimal surfaces in the Berger spheres. {\em  Indiana U. Math. J.}, to appear.



\end{thebibliography}
\end{document}